\newtheorem{theorem}{Theorem}
\newtheorem{lemma}[theorem]{Lemma}
\newtheorem{proposition}[theorem]{Proposition}
\newtheorem{corollary}[theorem]{Corollary}
\newtheorem{remark}[theorem]{Remark}
\newenvironment{proof}[1][Proof:]{\begin{trivlist}
\item[\hskip \labelsep {\bfseries #1}]}{\end{trivlist}}
\newcommand{\qed}{\nobreak \ifvmode \relax \else
      \ifdim\lastskip<1.5em \hskip-\lastskip
      \hskip1.5em plus0em minus0.5em \fi \nobreak
      \vrule height0.75em width0.5em depth0.25em\fi}
\font\ccc =msbm10
\begin{document}

\title{On certain permutation representations of the braid group}

\author{Valentin Vankov Iliev}

\maketitle

\section*{Abstract}

In this paper we present a structural theorem,
concerning certain homomorphic images of Artin braid group on $n$
strands in finite symmetric groups. It is shown that each one of 
these permutation groups is an extension of the symmetric group 
$S_n$ by an appropriate abelian group, and in "half" of 
the cases this extension splits.  
\vskip 10pt

\noindent {\bf Mathematics Subject Classification 2010}: 20F36,
20E22

\noindent {\bf Key words}: Artin braid group, permutation
representation, split extension

\noindent {\bf Mailing address}: Section of Algebra, Institute of
Mathematics and Informatics, Bulgarian Academy of Sciences, 1113
Sofia, Bulgaria

\section{Introducton}

\label{I}

In \cite{[1]}, E.~Artin studies the permutation representations of
his braid group $B_n$ in the symmetric group $S_n$ as a first step
toward the determination of all automorphisms of $B_n$. He
considers the last problem to be "quite difficult" and notes:
"Before one can attack it, one has obviously first to get the
permutations aut of the way."

The aim of this paper is to present the proof of a structure
theorem concerning certain permutation representations of $B_n$.
The corresponding finite permutation groups are defined as
subgroups of a wreath product, and this allows their description as an 
(in "half" of the cases, split) extension of 
the symmetric group $S_n$ by an abelian group.

The text is organized as follows. In the beginning of
Section~\ref{III.2} we remind some terminology and simple
statements, concerning wreath products of the type $W\wr S_n$,
where $W\leq S_d$ is a permutation group, and use a special
endomorphism $\omega$ of the symmetric group $S_\infty$ in order
to simplify the notation. The permutations $\sigma$ from the
non-trivial coset of the base group $W^{\left(2\right)}$ of $W\wr
S_2$ play a crucial role in this paper. If we consider such a
$\sigma$, and its translate $\omega^d(\sigma)$,
Theorem~\ref{III.2.12} establishes four conditions that are
equivalent to the fact that this pair of permutations is
braid-like. As a by-product we obtain four necessary and sufficient
conditions for two permutations in $S_d$ to commute in terms of 
(the cycle structure of) their common product. In particular, 
one of these conditions gives explicit expressions for the two 
commuting permutations, see Remark~\ref{III.2.14}. 

Section~\ref{III.3} is devoted to the description of certain
$n$-braid-like groups $B_n(\sigma)$ (that is, homomorphic images
of $B_n$) which are subgroups of some finite symmetric group.
These groups are generated by the braid-like couples $\sigma$,
$\omega^d(\sigma)$, and several of their $\omega^d$-translates.
The permutation $\sigma$ depends on a fixed permutation $\tau\in
S_d$, and on a permutation $u$ of the set 
$\mathcal{C}\left(\tau\right)$ of cycles of $\tau$, 
see~\ref{III.2}. In
Proposition~\ref{III.3.30} we show that, in general, the group
$B_n(\sigma)\leq S_d\wr S_n$ is intransitive, find its sets of
transitivity $(Y_o)_{o\in\mathcal{C}\left(\tau\right)}$, as well
as the restrictions $B_n(\sigma^{\left(o\right)})$ of
$B_n(\sigma)$ on $Y_o$, which are groups of the same type.
Moreover, $B_n(\sigma)$ is a sub-direct product of
$B_n(\sigma^{\left(o\right)})$'s. In particular, the group
$B_n(\sigma)$ is transitive if and only if the permutation $u$ is
a long cycle. Let $q$ be 
the order of the permutation $\tau$. Theorem~\ref{III.3.4}
is the central result of the paper, where we prove that each
$n$-braid-like group $B_n(\sigma)$ is an extension of 
the symmetric group $S_n$ by an abelian group. If we assume,
in addition, that $q$ is odd, then this extension splits, and  
we can use the method of "little groups" of Wigner and Mackey (see
\cite[Proposition 25]{[33]}) for finding, in principle, of all
finite-dimensional irreducible representations of $B_n(\sigma)$.
Thus, we can obtain a series of finite-dimensional irreducible
representations of Artin braid group $B_n$.

\section{Braid-like pairs of permutations}

\label{III.2}

First, we introduce some notation. We define the symmetric group
$S_\infty$ as the group of all permutations of the set of positive
integers, which fix all but finitely many elements. The symmetric
group $S_d$ is identified with the subgroup of $S_\infty$,
consisting  of  all permutations fixing any $k>d$. The conjugation 
by a permutation $\zeta$ in $S_\infty$ is denoted by $c_\zeta$. 

From now on we assume that $d$ and $n$ are integers, $d\geq 2$,
$n\geq 3$. Given a permutation $\zeta\in S_d$, we denote by
$\varrho\left(\zeta\right)= (1^{c_1\left(\zeta\right)},
2^{c_2\left(\zeta\right)},\hdots, d^{c_d\left(\zeta\right)})$ its
cycle type (here $c_i\left(\zeta\right)$ is the number of cycles
of $\zeta$ of length $i$). For any partition $\lambda$ of $d$,
$\lambda=(1^{m_1},2^{m_2},\hdots, d^{m_d})$, we set
$z_\lambda=1^{m_1}m_1!2^{m_2}m_2!\hdots d^{m_d}m_d!$.

We denote by $\omega$ the injective endomorphism of $S_\infty$,
defined via the rule
\[
(\omega(\sigma))(k)=\sigma(k-1)+1,\hbox{\ }k\geq
2,\hbox{\ }(\omega(\sigma))(1)=1.
\] As usual, we identify the wreath product $S_d\wr S_n$ with the
image of its natural faithful permutation representation, see
\cite[4.1.18]{[6]}, and for each subgroup $W\leq S_d$ we identify
the wreath product $W\wr S_n$ with its image via the above
inclusion.

Let $\theta_s\in S_{nd}$, be the involutions
\[
\theta_s=\left(
\begin{array}{ccccccccccccccc}
(s-1)d+1 &  \cdots & sd & sd+1 &\cdots & (s+1)d\\
sd+1 & \cdots & (s+1)d & (s-1)d+1 &\cdots & sd
\end{array}
\right)\in S_{nd},
\]
$s=1,\ldots, n-1$. We set $\Sigma_n=
\langle\theta_1,\theta_2,\ldots,\theta_{n-1}\rangle\leq S_{nd}$.
Then the direct product $W^{\left(n\right)}=W\omega^d(W)\ldots
\omega^{\left(n-1\right)d}(W)$ is a normal subgroup of of the
wreath product $W\wr S_n$ (its base group), $\Sigma_n$ is a
complement of $W^{\left(n\right)}$, and we have that $W\wr S_n$ is
the semidirect product of $\Sigma_n$ by $W^{\left(n\right)}$:
$W\wr S_n=W^{\left(n\right)}\cdot \Sigma_n$. The isomorphism
$\Sigma_n\simeq S_n$
maps the involution $\theta_s$ onto the transposition $(s,s+1)$,
$s=1,\ldots n-1$.

In particular, for any $W\leq S_d$ the wreath product
$W\wr S_2$ is the semidirect product of
$\Sigma_2=\langle\theta\rangle$ by its base group
$W^{\left(2\right)}$, where $\theta=\theta_1$. The left coset
$\theta W^{\left(2\right)}$ of $W^{\left(2\right)}$ in
$W\wr S_2$ consists of permutations of $[1,2d]$
that map $[1,d]$ onto $[d+1,2d]$.

Let $\zeta$ be a permutation of a finite set. By
$\mathcal{C}_m(\zeta)$ we denote the set of all cycles of length
$m$ of $\zeta$, and set $\mathcal{C}(\zeta)=\cup_{m\geq
1}\mathcal{C}_m(\zeta)$. Let us fix a permutation $\tau\in W$. We
denote by $M_{W,\tau}$ the set of all permutations
$\sigma\in\theta W^{\left(2\right)}$ that satisfy the equation
\begin{equation}
\sigma^2=\tau \omega^d(\tau),\label{III.2.7}
\end{equation}
and let $M_W=\cup_{\tau\in S_d}M_{W,\tau}$. We set
$M_{d,\tau}=M_{S_d,\tau}$, and $M_d=M_{S_d}$. Let us denote by
$N_{d,\tau}$ the set of all permutations $\sigma\in S_d\wr S_2$,
constructed in the following way:

(A) fix a permutation $u$ of the set $\mathcal{C}(\tau)$,
that maps the subset $\mathcal{C}_m(\tau)$ onto itself for any
$m=1,\ldots,d$;

(B) choose an $m=1,\ldots,d$;

(C) for any $\alpha\in \mathcal{C}_m(\tau)$ choose an initial
element $i_1$ of $\alpha$,  an initial element $j_1$ of
$u_m(\alpha)$, and write $\alpha$ and $u(\alpha)$, using the cycle
notation:
\[
\alpha=(i_1,i_2,\ldots, i_m),
\]
\[
u(\alpha)=(j_1,j_2,\ldots, j_m);
\]

(D) shuffle the cycle notations of $\alpha$ and
$\omega^d(u(\alpha))$ from (C), and get a cycle of length $2m$:
\begin{equation}
(i_1, j_1+d,i_2, j_2+d,\ldots, i_m, j_m+d);\label{III.2.10}
\end{equation}

(E) multiply the cycles~(\ref{III.2.10}) for all
$\alpha\in\mathcal{C}(\tau)$, and
denote the permutation thus obtained by $\sigma$.

If in the above construction sequence (A), (B), (C), (D), (E), we
replace the steps (D), (E) with the steps (D$'$), (E$'$), (F$'$)
below, the result is a binary relation $R_{d,\tau}$ on $S_d$. The
set $R_{d,\tau}$ consists of all ordered pairs
$(\varsigma_1,\varsigma_2)\in S_d\times S_d$, obtained by the
construction sequence (A), (B), (C), (D$'$), (E$'$), (F$'$), where

(D$'$) denote by $p_\alpha$ the bijection
\[
supp(\alpha)\to supp(u(\alpha)),\hbox{\ } i_1\mapsto j_1,\ldots,
i_m\mapsto j_m,
\]
and by $q_\alpha$ the bijection
\[
supp(u(\alpha))\to supp(\alpha),\hbox{\ } j_1\mapsto i_2,\ldots,
j_{m-1}\mapsto i_m, j_m\mapsto i_1,
\]
in case $m\geq 2$, and $p_\alpha=q_\alpha=(i_1,j_1)$ in case
$m=1$;

(E$'$) choose a cycle $o$ of the permutation $u$, and set
\begin{equation*}
\varsigma_1^{\left(o\right)}=\coprod_{\alpha\in o}p_\alpha\in S_{X_o}, \hbox{\rm
and\ }
\varsigma_2^{\left(o\right)}=\coprod_{\alpha\in o}q_\alpha\in S_{X_o},
\end{equation*}
where $X_o=\cup_{\alpha\in o}supp(\alpha)$;

(F$'$) set
\begin{equation*}
\varsigma_1=\prod_{o\in\mathcal{C}\left(u\right)}
\varsigma_1^{\left(o\right)}\in S_d, \hbox{\rm and\ }
\varsigma_2=\prod_{o\in\mathcal{C}\left(u\right)}
\varsigma_2^{\left(o\right)}\in S_d.
\end{equation*}

Let $R_d=\cup_{\tau\in S_d}R_{d,\tau}$, and $N_d=\cup_{\tau\in
S_d}N_{d,\tau}$.

The family $(X_o)_{o\in\mathcal{C}\left(u\right)}$ of sets is a
partition of the integer-valued interval $[1,d]$. Let $\theta_o$
be the restriction of the permutation $\theta$ on $X_o\cup X_o+d$.
Let us set $\tau^{\left(o\right)}=\prod_{\alpha\in o}\alpha$, so
$\tau=\prod_{o\in\mathcal{C}\left(u\right)}\tau^{\left(o\right)}$.
Note that each one of the families
\[
(\varsigma_1^{\left(o\right)})_{o\in\mathcal{C}\left(u\right)},\hbox{\
}(\varsigma_2^{\left(o\right)})_{o\in\mathcal{C}\left(u\right)},\hbox{\
}(\theta_o)_{o\in\mathcal{C}\left(u\right)},\hbox{\rm\ and\ }
(\tau^{\left(o\right)})_{o\in\mathcal{C}\left(u\right)},
\]
consists of permutations with pairwise disjoint supports.

Next technical lemma can be proved by inspection.

\begin{lemma}\label{III.2.4} (i) Let $o=(\alpha, u(\alpha), u^2(\alpha),\ldots)$
be a cycle of the permutation $u$, and let us choose cyclic
notations for the members of $o$: $\alpha=(i_1,i_2,\ldots, i_m)$,
$u(\alpha)=(j_1,j_2,\ldots, j_m)$, $u^2(\alpha) =(k_1,k_2,\ldots,
k_m)$,$\ldots$. Then
\[
\theta_o\varsigma_1^{\left(o\right)}\omega^d(\varsigma_2^{\left(o\right)})=
\]
\[
(i_1, j_1+d,i_2, j_2+d,\ldots, i_m, j_m+d)(j_1, k_1+d,j_2, k_2+d,\ldots, j_m, k_m+d)\ldots;
\]
(ii) $\varsigma_1^{\left(o\right)}\varsigma_2^{\left(o\right)}=
\varsigma_2^{\left(o\right)}\varsigma_1^{\left(o\right)}=\tau^{\left(o\right)}$;

(iii) if $\sigma\in N_{d,\tau}$, then the ordered pair
$(\varsigma_1,\varsigma_2)\in R_{d,\tau}$, obtained by the
construction sequence with the same initial segment, is such that
$\sigma =\theta\varsigma_1\omega^d(\varsigma_2)$;

(iv) if $(\varsigma_1,\varsigma_2)\in R_{d,\tau}$, then
$\varsigma_1\varsigma_2=\varsigma_2\varsigma_1=\tau$.

\end{lemma}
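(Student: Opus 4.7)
The plan is to unwind the definitions in order and track the action of each composite permutation on a generic element; once one fixes a single initial element per cycle of $\tau$ and uses it uniformly (so that whenever a cycle $\alpha$ appears in the role of $u(\alpha')$ it carries the same initial element as in its own cyclic notation), every step reduces to a direct computation.

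For (i), I would compute $\theta_o\varsigma_1^{(o)}\omega^d(\varsigma_2^{(o)})$ pointwise on $X_o\cup(X_o+d)$. On $i_s\in supp(\alpha)\subset X_o$ the factor $\omega^d(\varsigma_2^{(o)})$ is the identity (it is supported on integers $>d$), $\varsigma_1^{(o)}$ acts via $p_\alpha$ sending $i_s\mapsto j_s$, and $\theta_o$ then sends $j_s\mapsto j_s+d$. On $j_s+d$ the factor $\omega^d(\varsigma_2^{(o)})$ is the translate by $d$ of $\varsigma_2^{(o)}$; since $j_s\in supp(u(\alpha))$ the relevant component is $q_\alpha$, which yields $i_{s+1}+d$ (cyclically modulo $m$). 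Next $\varsigma_1^{(o)}$ fixes this large element, and $\theta_o$ returns it to $i_{s+1}$. Iterating produces the cycle $(i_1,j_1+d,\ldots,i_m,j_m+d)$; restarting from $j_s$ and repeating gives $(j_1,k_1+d,\ldots,j_m,k_m+d)$, and so on until all cycles $u^t(\alpha)\in o$ are exhausted.

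For (ii) I would take $i_s\in supp(\alpha)$ and identify the unique component of $\varsigma_2^{(o)}$ whose domain contains $i_s$: it is $q_{u^{-1}(\alpha)}$, sending $i_s$ to the $(s+1)$-th element of $u^{-1}(\alpha)$. Applying $\varsigma_1^{(o)}$ via $p_{u^{-1}(\alpha)}$ sends this element back to $i_{s+1}$, i.e.\ to $\tau^{(o)}(i_s)$; the reverse composition is checked by the same argument. Part (iii) then follows because the families $(\theta_o)_o$, $(\varsigma_1^{(o)})_o$, $(\varsigma_2^{(o)})_o$ have pairwise disjoint supports, so
\[
\theta\,\varsigma_1\,\omega^d(\varsigma_2) \;=\; \prod_{o\in\mathcal{C}(u)} \theta_o\,\varsigma_1^{(o)}\,\omega^d(\varsigma_2^{(o)}),
\]
and (i) identifies the right-hand side cycle-by-cycle with the permutation produced by step (E). Finally, (iv) follows from (ii) by the same disjoint-support factorisation: $\varsigma_1\varsigma_2=\prod_o\varsigma_1^{(o)}\varsigma_2^{(o)}=\prod_o\tau^{(o)}=\tau$, and symmetrically for the reverse order.

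The only real difficulty is notational: keeping the chosen initial elements consistent across the several roles played by the same cycle (as $\alpha$, as $u(\alpha')$, as $u^2(\alpha'')$, $\ldots$), which is precisely why the author labels the lemma as provable by inspection.
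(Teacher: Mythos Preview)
Your proposal is correct and is exactly what the paper intends: the author states only that the lemma ``can be proved by inspection'' and gives no further argument, and your pointwise computation is precisely that inspection carried out in full. The one bookkeeping point you flag---fixing the initial element of each cycle of $\tau$ once and for all so that the roles of $\alpha$, $u(\alpha')$, $u^2(\alpha'')$ are compatible---is indeed the only place one must be careful, and you handle it correctly.
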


The action of a group on itself via conjugation can be used 
in order to prove 

\begin{lemma}\label{III.2.5} Let $W\leq S_d$ be a
permutation group, and let $p(W)$ be the number of conjugacy
classes of the group $W$. Then the number of ordered pairs of elements
of $W$, whose components commute, is $|W|p(W)$.

\end{lemma}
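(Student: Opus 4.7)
The statement is an instance of the classical ``commuting probability'' identity, and the hint preceding the lemma telegraphs the approach: use the conjugation action of $W$ on itself together with the Cauchy--Frobenius (Burnside) orbit-counting lemma.

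My plan is to let $W$ act on itself by conjugation, $g \cdot h = ghg^{-1}$. Under this action the orbits are precisely the conjugacy classes, so there are $p(W)$ of them. The stabilizer of an element $h \in W$ is the centralizer $C_W(h) = \{g \in W : gh = hg\}$. Applying Burnside's lemma gives
\[
p(W) = \frac{1}{|W|} \sum_{g \in W} |\mathrm{Fix}(g)|,
\]
where $\mathrm{Fix}(g) = \{h \in W : ghg^{-1} = h\} = C_W(g)$.

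Multiplying by $|W|$ yields
\[
|W|\,p(W) = \sum_{g \in W} |C_W(g)| = \bigl|\{(g,h) \in W \times W : gh = hg\}\bigr|,
\]
since the right-hand double sum counts exactly the commuting ordered pairs (for each $g$ we count the elements $h$ that commute with it). This is the desired identity.

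There is no real obstacle: the argument is a direct application of Burnside and amounts to nothing more than switching the order of summation in the double count $\sum_{g}\sum_{h}[gh=hg]$. The only points worth being careful about are that the action is the conjugation action of $W$ on itself (not on some larger group), so that fixed-point sets are centralizers computed inside $W$, and that the number of orbits of this action equals $p(W)$ by definition of a conjugacy class.
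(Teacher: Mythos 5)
Your proof is correct, and it is exactly the argument the paper intends: the paper gives no details beyond the hint that the conjugation action of $W$ on itself can be used, which is precisely your Burnside/centralizer count. Nothing to add.
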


A pair of permutations $\eta$, $\zeta$ from $S_\infty$ is said to
be \emph{braid-like} if $\eta\zeta\neq\zeta\eta$, and
$\eta\zeta\eta=\zeta\eta\zeta$.

\begin{theorem}\label{III.2.12} Let $W\leq S_d$ be a permutation group,
and let $\sigma\in\theta W^{\left(2\right)}$,
$\sigma=\theta\varsigma_1\omega^d(\varsigma_2)$, where
$\varsigma_1,\varsigma_2\in W$. The following three statements are then
equivalent:

(i) the pair of permutations $\sigma$, $\omega^d(\sigma)$ is
braid-like;

(ii) one has $\sigma\in M_W$;

(iii) the permutations $\varsigma_1$ and $\varsigma_2$ commute.

Under these conditions, if $\sigma\in M_{W,\tau}$, $\tau\in W$, then
$\varsigma_1\varsigma_2=\varsigma_2\varsigma_1=\tau$.
If, in addition, $W=S_d$, then parts (i) -- (iii) are equivalent to
each one of

(iv) one has $\sigma\in N_d$;

(v) one has $(\varsigma_1,\varsigma_2)\in R_d$.

\end{theorem}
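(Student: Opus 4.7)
The plan is to first establish (ii) $\Leftrightarrow$ (iii) by a direct computation of $\sigma^{2}$, then to reduce both sides of the braid relation in order to get (i) $\Leftrightarrow$ (iii), and finally, under the extra assumption $W=S_{d}$, to combine Lemma~\ref{III.2.4} with a cycle-structure analysis of $\sigma$ on $[1,2d]$ to derive (iii) $\Leftrightarrow$ (iv) $\Leftrightarrow$ (v).

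First I would compute $\sigma^{2}$. Since $\varsigma_{1}$ and $\omega^{d}(\varsigma_{2})$ have disjoint supports they commute; combined with the elementary conjugation rules $\theta\omega^{d}(\varsigma_{2})=\varsigma_{2}\theta$ and $\theta\varsigma_{1}\theta=\omega^{d}(\varsigma_{1})$, repeated use of these moves yields
\[
\sigma^{2}=\varsigma_{2}\varsigma_{1}\cdot\omega^{d}(\varsigma_{1}\varsigma_{2}).
\]
Because $W^{(2)}=W\cdot\omega^{d}(W)$ is an internal direct product, the equation $\sigma^{2}=\tau\omega^{d}(\tau)$ forces $\varsigma_{2}\varsigma_{1}=\tau=\varsigma_{1}\varsigma_{2}$, and the converse is immediate. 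This proves (ii) $\Leftrightarrow$ (iii) and at the same time identifies $\tau$ as the common product $\varsigma_{1}\varsigma_{2}=\varsigma_{2}\varsigma_{1}$.

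For (i) $\Leftrightarrow$ (iii) I would run the same kind of bookkeeping on the two triple products, using disjoint supports and the identity $\theta_{s}\omega^{(s-1)d}(\xi)\theta_{s}=\omega^{sd}(\xi)$. Both sides would reduce to
\begin{align*}
\sigma\omega^{d}(\sigma)\sigma &= \theta_{1}\theta_{2}\theta_{1}\cdot\varsigma_{1}^{2}\,\omega^{d}(\varsigma_{1}\varsigma_{2})\,\omega^{2d}(\varsigma_{2}^{2}),\\
\omega^{d}(\sigma)\sigma\omega^{d}(\sigma) &= \theta_{2}\theta_{1}\theta_{2}\cdot\varsigma_{1}^{2}\,\omega^{d}(\varsigma_{2}\varsigma_{1})\,\omega^{2d}(\varsigma_{2}^{2}).
\end{align*}
Since $\theta_{1}\theta_{2}\theta_{1}=\theta_{2}\theta_{1}\theta_{2}$, the braid relation collapses to $\varsigma_{1}\varsigma_{2}=\varsigma_{2}\varsigma_{1}$. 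The non-commutativity condition in the definition of a braid-like pair is automatic, since the $\Sigma_{n}$-components of $\sigma\omega^{d}(\sigma)$ and $\omega^{d}(\sigma)\sigma$ are $\theta_{1}\theta_{2}$ and $\theta_{2}\theta_{1}$, distinct already in the quotient $W\wr S_{n}/W^{(n)}\simeq S_{n}$. This is the principal obstacle: the difficulty is entirely in choosing the order of commutations so that both triple products collapse to a single $\Sigma_{3}$-word times a common base-group tail, with the only asymmetry being $\varsigma_{1}\varsigma_{2}$ versus $\varsigma_{2}\varsigma_{1}$.

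Finally, assume $W=S_{d}$. The formulas $\sigma(i)=\varsigma_{1}(i)+d$ for $i\in[1,d]$ and $\sigma(j+d)=\varsigma_{2}(j)$ for $j\in[1,d]$ show that every cycle of $\sigma$ has the shape $(i_{1},j_{1}+d,i_{2},j_{2}+d,\ldots,i_{m},j_{m}+d)$ with $i_{k+1}=\tau(i_{k})$ and $j_{k}=\varsigma_{1}(i_{k})$. Because $\varsigma_{1}$ commutes with $\tau$, it sends each $\tau$-cycle $\alpha$ to a $\tau$-cycle $u(\alpha)$ of the same length; this is precisely the bijection $u$ demanded in step (A). Reading the cycles of $\sigma$ off this data exhibits $\sigma$ as the output of (A)--(E), which proves (iii) $\Rightarrow$ (iv); the converse (iv) $\Rightarrow$ (ii) follows from Lemma~\ref{III.2.4}(iii),(iv) together with the uniqueness of the decomposition $\theta w_{1}\omega^{d}(w_{2})$ in $\theta W^{(2)}$. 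The equivalence (iv) $\Leftrightarrow$ (v) is built into the two constructions: they share the initial data (A)--(C), and Lemma~\ref{III.2.4}(iii) says that this shared data produces simultaneously a pair $(\varsigma_{1},\varsigma_{2})\in R_{d,\tau}$ and the permutation $\sigma=\theta\varsigma_{1}\omega^{d}(\varsigma_{2})\in N_{d,\tau}$.
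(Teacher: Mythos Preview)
Your argument is correct, but it diverges from the paper's in two places.

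For (i) $\Leftrightarrow$ (ii)/(iii), the paper works elementwise: it evaluates both triple products on $i\in[1,d]$ and on $j\in[d+1,2d]$ separately and observes that equality reduces to $\sigma^{2}(d+i)=d+\sigma^{2}(i)$, i.e.\ $\sigma\in M_W$. You instead push everything into the wreath-product normal form $\Sigma_3\cdot W^{(3)}$ and read off the base-group tails. Your route is cleaner and makes the appearance of $\varsigma_1\varsigma_2$ versus $\varsigma_2\varsigma_1$ transparent; the paper's route avoids any bookkeeping with $\theta_1,\theta_2$ but is slightly more ad hoc.

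The more substantial difference is in (ii)/(iii) $\Leftrightarrow$ (iv). The paper does \emph{not} analyse the cycle structure of $\sigma$ directly. Instead it argues by counting: from (ii) $\Leftrightarrow$ (iii) together with Lemma~\ref{III.2.5} one gets $|M_d|=p(d)\,d!$, and a separate computation summing $|N_{d,\tau}|$ over conjugacy classes gives $|N_d|=p(d)\,d!$ as well; since $N_{d,\tau}\subset M_{d,\tau}$ is immediate, equality of cardinalities forces $N_d=M_d$. Your argument instead produces the data $(u,\text{initial elements})$ explicitly from the commuting pair $(\varsigma_1,\varsigma_2)$ by reading off the $\sigma$-cycles, using that $\varsigma_1$ commutes with $\tau$ and hence permutes the $\tau$-cycles. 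This is constructive and in fact supplies exactly the explicit expressions alluded to in Remark~\ref{III.2.14}; the paper's counting proof is shorter but non-constructive, and is the only place Lemma~\ref{III.2.5} is used.
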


\begin{proof} Suppose that (i) holds, and let $\eta\in S_d$ be
the permutation with $\sigma(i)=d+\eta(i)$ for any $i\in [1,d]$. We have
\[
\sigma\omega^d(\sigma)\sigma(i)=\sigma\omega^d(\sigma)(d+\eta(i))=d+\sigma\eta(i),
\]
and
\[
\omega^d(\sigma)\sigma\omega^d(\sigma)(i)=\omega^d(\sigma)\sigma(i)=
\omega^d(\sigma)(d+\eta(i))=d+\sigma\eta(i),
\]
for $i\in [1,d]$. Let us set $i(j)=j-d$ where $j\in [d+1,2d]$. We
have
\[
\sigma\omega^d(\sigma)\sigma(j)=\sigma^2(j)=\sigma^2(d+i(j)),
\]
and
\[
\omega^d(\sigma)\sigma\omega^d(\sigma)(j)=
\omega^d(\sigma)\sigma\omega^d(\sigma)(d+i(j))=
\omega^d(\sigma)\sigma(d+\sigma(i(j)))=
\]
\[
\omega^d(\sigma)(d+\sigma(i(j)))=d+\sigma^2(i(j)),
\]
for any $j\in [d+1,2d]$. Therefore the pair of permutations
$\sigma$, $\omega^d(\sigma)$ is braid-like if and only if
$\sigma^2(d+i)=d+\sigma^2(i)$ for any $i\in [1,d]$.
Thus, part (i) is equivalent to part (ii).
A comparison of the equalities~(\ref{III.2.7}),
and $\sigma^2=\varsigma_2\varsigma_1\omega^d(\varsigma_1\varsigma_2)$,
yields the equivalence of parts (ii) and (iii).

Now, suppose $W=S_d$. In accord with Theorem~\ref{III.2.12}, (ii), (iii),
and Lemma~\ref{III.2.5}, we
obtain $|M_d|=p(d)d!$, where $p(d)$ is the number of partitions of
$d$. On the other hand, let for any $\upsilon\in S_d$,
$C(\upsilon)$ be its conjugacy class, and let $C$ be a set of
representatives of the conjugacy classes of $S_d$. We have
\[
|N_d|=\sum_{\tau\in S_d}|N_{d,\tau}|=\sum_{\upsilon\in
C}\sum_{\varsigma\in C\left(\upsilon\right)}|N_{d,\varsigma}|=
\sum_{\upsilon\in
C}\frac{d!}{z_{\varrho\left(\upsilon\right)}}
z_{\varrho\left(\upsilon\right)}=p(d)d!.
\]
Since for any $\tau\in S_d$
we have $N_{d,\tau}\subset M_{d,\tau}$, then $N_d\subset M_d$. Therefore,
$N_{d,\tau}=M_{d,\tau}$ for all $\tau\in S_d$, and, in particular,
parts (ii) and (iv) are equivalent.

Lemma~\ref{III.2.4}, (iii), and (iv), yield that part (iv) implies
part (v), and part (v) implies part (iii), respectively.

\end{proof}

\begin{corollary}\label{III.2.13} If $\sigma\in M_{W,\tau}$
for some $\tau\in W$, and $a\in
\langle\tau\rangle^{\left(2\right)}$,
$a=\tau^k\omega^d(\tau^\ell)$, then $\sigma\in
M_{W,\tau^{k+\ell+1}}$.

\end{corollary}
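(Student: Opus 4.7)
The plan is to apply Theorem~\ref{III.2.12}, particularly the equivalence of parts (ii) and (iii), in both directions --- first to extract algebraic data from the hypothesis, and then to repackage a modified version of that data into the conclusion. From $\sigma\in M_{W,\tau}$ the theorem supplies a decomposition $\sigma=\theta\varsigma_1\omega^d(\varsigma_2)$ with $\varsigma_1,\varsigma_2\in W$ commuting and $\varsigma_1\varsigma_2=\varsigma_2\varsigma_1=\tau$. Because $\tau$ is the product of the commuting pair, $\tau$ itself commutes with each of $\varsigma_1$ and $\varsigma_2$, so the three permutations $\varsigma_1,\varsigma_2,\tau$ commute pairwise inside $W$; this is the only algebraic input I will need.

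The next step is to exhibit a commuting pair in $W$ whose common product is $\tau^{k+\ell+1}$, for then the direction (iii)$\Rightarrow$(ii) of Theorem~\ref{III.2.12} certifies membership in $M_{W,\tau^{k+\ell+1}}$. I would take $\varsigma_1'=\tau^k\varsigma_1$ and $\varsigma_2'=\tau^\ell\varsigma_2$, both in $W$ because $\tau\in W$. Pairwise commutativity delivers at once
\[
\varsigma_1'\varsigma_2'=\tau^{k+\ell}\varsigma_1\varsigma_2=\tau^{k+\ell+1}=\tau^{k+\ell}\varsigma_2\varsigma_1=\varsigma_2'\varsigma_1',
\]
so $(\varsigma_1',\varsigma_2')$ is the required commuting pair, whence $\theta\varsigma_1'\omega^d(\varsigma_2')\in M_{W,\tau^{k+\ell+1}}$. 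A short rearrangement, using that $\varsigma_1$ and $\omega^d(\tau^\ell)$ have disjoint supports and therefore commute, rewrites this permutation as
\[
\theta\varsigma_1'\omega^d(\varsigma_2')=\theta\,\tau^k\omega^d(\tau^\ell)\,\varsigma_1\,\omega^d(\varsigma_2)=c_\theta(a)\,\sigma,
\]
which exhibits the element of $M_{W,\tau^{k+\ell+1}}$ concretely in terms of $\sigma$ and $a$.

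The main obstacle is one of interpretation rather than calculation. The decomposition $\sigma=\theta\varsigma_1\omega^d(\varsigma_2)$ is unique, because $\varsigma_1$ is read off from the action of $\sigma$ on $[1,d]$ and $\varsigma_2$ from the action on $[d+1,2d]$; hence $\sigma$ uniquely determines the $\tau$ for which $\sigma\in M_{W,\tau}$. The natural content of the corollary is therefore that $M_{W,\tau^{k+\ell+1}}$ is populated by the explicit construction above, with the conjugate translate $c_\theta(a)\,\sigma$ serving as the witness, and the verification reduces entirely to the commutativity computation in the second paragraph.
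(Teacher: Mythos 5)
Your proof is correct and is essentially the paper's own argument: the paper shows directly that $\sigma a=\theta\varsigma_1\tau^k\omega^d(\varsigma_2\tau^\ell)$ squares to $\tau^{k+\ell+1}\omega^d(\tau^{k+\ell+1})$, which is exactly your witness $c_\theta(a)\,\sigma=\sigma a$ together with the squaring computation that underlies the implication (iii)$\Rightarrow$(ii) of Theorem~\ref{III.2.12}. You are also right to read the conclusion as $\sigma a\in M_{W,\tau^{k+\ell+1}}$ rather than $\sigma\in M_{W,\tau^{k+\ell+1}}$ (the latter forces $\tau^{k+\ell}=1$, since $\sigma^2$ determines $\tau$); the paper's own proof, and the way the corollary is invoked in Theorem~\ref{III.3.4}, (iii), confirm that reading.
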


\begin{proof} Let $\sigma=\theta\varsigma_1\omega^d(\varsigma_2)$, where
$\varsigma_1,\varsigma_2\in W$ commute, and $\tau=\varsigma_1\varsigma_2$.
We have
\[
\sigma a=\theta\varsigma_1\omega^d(\varsigma_2)\tau^k\omega^d(\tau^\ell)=
\theta\varsigma_1\tau^k\omega^d(\varsigma_2\tau^\ell),
\]
and therefore
\[
(\sigma a)^2=\theta\varsigma_1\tau^k\omega^d(\varsigma_2\tau^\ell)
\theta\varsigma_1\tau^k\omega^d(\varsigma_2\tau^\ell)=
c_\theta(\varsigma_1\tau^k\omega^d(\varsigma_2\tau^\ell))
\varsigma_1\tau^k\omega^d(\varsigma_2\tau^\ell)=
\]
\[
\omega^d(\varsigma_1\tau^k)\varsigma_2\tau^\ell
\varsigma_1\tau^k\omega^d(\varsigma_2\tau^\ell)=
\varsigma_2\tau^\ell\varsigma_1\tau^k
\omega^d(\varsigma_1\tau^k\varsigma_2\tau^\ell)=\tau^{k+\ell+1}
\omega^d(\tau^{k+\ell+1}).
\]

\end{proof}

\begin{remark}\label{III.2.14} The equivalence of parts (iii) and (v)
of Theorem~\ref{III.2.12} gives explicit expressions 
of any two permutations in the symmetric group $S_d$, which commute,
in terms of the cyclic structure of their common product. 

\end{remark}

\section{Braid-like permutation groups}

\label{III.3}

We remind the definition of Artin braid group $B_n$ on $n$ strands
as an abstract group: this is the group generated by $n-1$
generators $\beta_1$, $\beta_2$, $\ldots$, $\beta_{n-1}$, subject
to the following braid relations
\[
\beta_r\beta_s=\beta_s\beta_r,\hbox{\ } |r-s|\geq 2,
\]
\[
\beta_r\beta_s\beta_r=\beta_s\beta_r\beta_s,\hbox{\ } |r-s|=1.
\]

A group is said to be \emph{$n$-braid-like} if it is a homomorphic
image of Artin braid group $B_n$.

Let $\sigma\in \theta (S_d)^{\left(2\right)}$, where
$\theta=\theta_1\in\Sigma_n$. The permutations
\[
\sigma_1=\sigma,\hbox{\ }\sigma_2=\omega^d(\sigma),\ldots,
\sigma_{n-1}=\omega^{\left(n-2\right)d}(\sigma)
\]
are from the wreath product $S_d\wr S_n$; let $B_n(\sigma)$ be the
subgroup of $S_d\wr S_n$, generated by them:
\begin{equation*}
B_n(\sigma)=\langle\sigma_1,\sigma_2,\ldots, \sigma_{n-1}\rangle.
\end{equation*}

Let $W\leq S_d$ be a permutation group. We note that if $\sigma\in
\theta W^{\left(2\right)}$, then $B_n(\sigma)$ is a subgroup of
the wreath product $W\wr S_n$.

From now on let us suppose that the pair of permutations $\sigma$,
$\omega^d(\sigma)$, where $\sigma\in \theta S_d\omega^d(S_d)$, is
braid-like, and let $\sigma^2=\tau \omega^d(\tau)$, $\tau\in S_d$.
Using Lemma~\ref{III.2.4}, (i), and Theorem~\ref{III.2.12}, we
have
$\sigma=\prod_{o\in\mathcal{C}\left(u\right)}\sigma^{\left(o\right)}$,
where $\sigma^{\left(o\right)}=
\theta_o\varsigma_1^{\left(o\right)}\omega^d(\varsigma_2^{\left(o\right)})$.
According to Lemma~\ref{III.2.4}, (ii), and
Theorem~\ref{III.2.12}, (i), (iii), applied for $W=S_{X_o}$, we
obtain that the pair of permutations $\sigma^{\left(o\right)}\in
\theta_o(S_{X_o})^{\left(2\right)}$,
$\omega^d(\sigma^{\left(o\right)})$, is braid-like. We set
\[
\sigma_1^{\left(o\right)}=\sigma^{\left(o\right)},\hbox{\ }
\sigma_2^{\left(o\right)}=\omega^d(\sigma^{\left(o\right)}),
\ldots,
\sigma_{n-1}^{\left(o\right)}=\omega^{\left(n-2\right)d}(\sigma^{\left(o\right)}),
\]
\[
B_n(\sigma^{\left(o\right)})=\langle\sigma_1^{\left(o\right)},
\sigma_2^{\left(o\right)},\ldots,
\sigma_{n-1}^{\left(o\right)}\rangle,
\]
and
\[
Y_o=X_o\cup X_o+d\cup\ldots\cup X_o+(n-1)d,\hbox{\ }
o\in\mathcal{C}\left(u\right).
\]
The family of sets $(Y_o)_{o\in\mathcal{C}\left(u\right)}$ is a
partition of the integer-valued interval $[1,nd]$.

\begin{proposition}\label{III.3.30} For any cycle $o$ of 
the permutation $u$, one has:

(i) the permutations
$\sigma^{\left(o\right)}=\sigma_1^{\left(o\right)}$,
$\sigma_2^{\left(o\right)}$, $\ldots$,
$\sigma_{n-1}^{\left(o\right)}$ (respectively, the permutations
$\sigma=\sigma_1$, $\sigma_2$, $\ldots$,
$\sigma_{n-1}$) satisfy the braid relations;

(ii) the group $B_n(\sigma^{\left(o\right)})$ (respectively,
the group $B_n(\sigma)$) is $n$-braid-like with surjective
homomorphism $B_n\to B_n(\sigma^{\left(o\right)})$,
(respectively, $B_n\to B_n(\sigma)$) defined by the map
$\beta_s\mapsto\sigma_s^{\left(o\right)}, s=1,\ldots, n-1$
(respectively, $\beta_s\mapsto\sigma_s, s=1,\ldots, n-1$);

(iii) the group $B_n(\sigma^{\left(o\right)})$ is transitive on
the set $Y_o$, the family $(Y_o)_{o\in\mathcal{C}\left(u\right)}$
consists of all sets of transitivity for $B_n(\sigma)$, and
$B_n(\sigma)$ is a subdirect product of
$B_n(\sigma^{\left(o\right)})$, $o\in\mathcal{C}\left(u\right)$:
$B_n(\sigma)\leq\prod_{o\in\mathcal{C}\left(u\right)}
B_n(\sigma^{\left(o\right)})$.

\end{proposition}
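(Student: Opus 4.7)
For part (i), the commutation relation $\sigma_r\sigma_s=\sigma_s\sigma_r$ for $|r-s|\geq 2$ follows immediately from disjointness of supports: $\sigma_r=\omega^{(r-1)d}(\sigma)$ acts inside $[(r-1)d+1,(r+1)d]$, and similarly for $\sigma_s$, and these two intervals are disjoint once $|r-s|\geq 2$. For the triple relation at $|r-s|=1$, I would observe that $\sigma_{s+1}=\omega^d(\sigma_s)$ and apply the endomorphism $\omega^{(s-1)d}$ of $S_\infty$ to the hypothesis that $(\sigma,\omega^d(\sigma))$ is braid-like. The identical argument works for the $\sigma_s^{(o)}$, using the pair $(\sigma^{(o)},\omega^d(\sigma^{(o)}))$, which was shown braid-like in the paragraph preceding the proposition. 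Part (ii) then follows at once from the universal property of $B_n$ as the group presented by generators and braid relations, with surjectivity of the homomorphism $\beta_s\mapsto\sigma_s$ (respectively $\beta_s\mapsto\sigma_s^{(o)}$) being by construction of $B_n(\sigma)$ and $B_n(\sigma^{(o)})$.

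For the structural half of (iii), the key observation is that $\sigma_s^{(o)}=\omega^{(s-1)d}(\sigma^{(o)})$ has support contained in $X_o+(s-1)d\cup X_o+sd\subseteq Y_o$; since the $Y_o$'s partition $[1,nd]$, the factors of $\sigma_s=\prod_o\sigma_s^{(o)}$ have pairwise disjoint supports. This immediately yields that each $Y_o$ is $B_n(\sigma)$-stable, that the restriction of $B_n(\sigma)$ to $Y_o$ is generated by the $\sigma_s^{(o)}$'s and therefore equals $B_n(\sigma^{(o)})$, and that the induced map $B_n(\sigma)\hookrightarrow\prod_o B_n(\sigma^{(o)})$ is an injection surjecting onto each factor, i.e., a subdirect embedding.

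The transitivity of $B_n(\sigma^{(o)})$ on $Y_o$ is where the main work lies, and I expect this to be the principal obstacle. My plan is two-step. First, the natural wreath projection $S_{X_o}\wr S_n\to S_n$ sends $\sigma_s^{(o)}$ to the transposition $(s,s+1)$, so the image of $B_n(\sigma^{(o)})$ in $S_n$ is all of $S_n$, which already yields transitivity on the $n$ "levels" $X_o+(k-1)d$. Second, one must show that the stabilizer of a fixed level acts transitively on the elements of that level. For this I would rely on the explicit cycle description in Lemma~\ref{III.2.4}(i): the cycle of $\sigma_s^{(o)}$ through a given point couples $supp(u^p(\alpha))+(s-1)d$ with $supp(u^{p+1}(\alpha))+sd$, while the even power $(\sigma_s^{(o)})^2$ restricts on level $k$ to the $\tau$-rotation within each $supp(\alpha)+kd$. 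The delicate point — and where the argument must concentrate its effort — is to exhibit words in the generators that move between different blocks $supp(u^p(\alpha))+kd$ at a fixed level as $p$ varies, since any single $\sigma_s^{(o)}$ couples a level only to its immediate neighbor and shifts the $u$-index by $\pm 1$ in lock-step with the level index; tracking this joint evolution carefully, and verifying that enough mixing is available to connect an arbitrary pair of elements of $Y_o$, is the technical core of part (iii).
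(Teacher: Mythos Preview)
For parts (i), (ii), and the subdirect-product clause of (iii), your argument is essentially identical to the paper's: disjoint supports give the commuting relation, the braid-like hypothesis transported by $\omega^{(s-1)d}$ gives the triple relation, the presentation of $B_n$ gives the homomorphism, and the block decomposition $[1,nd]=\coprod_o Y_o$ gives the subdirect embedding. The paper compresses all of this into a few lines and a reference; your write-up simply spells it out.

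Where your proposal and the paper diverge is on the transitivity claim. The paper disposes of it with ``straightforward inspection,'' while you correctly flag it as the delicate point and isolate the mechanism: each $\sigma_s^{(o)}$ couples level $s-1$ to level $s$ while shifting the $u$-index by $\pm 1$ in lock-step. Your instinct here is not merely well-founded --- it is an actual obstruction. Labelling the blocks by pairs $(p,k)$ with $p\in\hbox{\ccc Z}/(\ell)$ (where $\ell$ is the length of the cycle $o$) and $k\in\{0,\ldots,n-1\}$, the generator $\sigma_s^{(o)}$ swaps the blocks $(p,s-1)\leftrightarrow(p+1,s)$; hence the quantity $p-k\pmod\ell$ is invariant under the whole group $B_n(\sigma^{(o)})$. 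When $\ell>1$ this forces at least $\ell$ orbits on $Y_o$, so the transitivity assertion (and with it Corollary~\ref{III.3.31}) fails as stated.

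A minimal counterexample: take $d=2$, $\tau=\mathrm{id}$, and let $u$ be the nontrivial permutation of the two $1$-cycles of $\tau$; then $\sigma=(1,4)(2,3)$, and for $n=3$ one has $\sigma_1=(1,4)(2,3)$, $\sigma_2=(3,6)(4,5)$. The group $B_3(\sigma)$ has the two orbits $\{1,4,5\}$ and $\{2,3,6\}$ on $[1,6]=Y_o$, although $u$ is a long cycle. So no amount of ``tracking this joint evolution carefully'' will close the gap you identified; the statement itself needs correction (the genuine orbits are the sets $\bigcup_{k=0}^{n-1}\bigl(supp(u^k(\alpha))+kd\bigr)$, one for each $\alpha\in o$).
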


\begin{proof} It is enough to prove parts (i) and (ii)
for the permutations $\sigma^{\left(o\right)}$.

(i) If $|r-s|\geq 2$, then the supports of the permutations
$\sigma_r^{\left(o\right)}$ and $\sigma_s^{\left(o\right)}$ are
disjoint, so they commute. In case $|r-s|=1$ part (i) holds
because the pair of permutations $\sigma_1^{\left(o\right)}$,
$\sigma_2^{\left(o\right)}$, is braid-like.

(ii) Part (i) and \cite[Lemma 1.2]{[25]} yield immediately part
(ii).

(iii) Straightforward inspection.

\end{proof}

Part (iii) of the above theorem yields immediately

\begin{corollary} \label{III.3.31} The group $B_n(\sigma)$ is transitive if and only if
the permutation $u$ of the set $\mathcal{C}\left(\tau\right)$ is a long cycle.

\end{corollary}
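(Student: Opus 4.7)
The plan is to deduce this corollary as an immediate unpacking of Proposition~\ref{III.3.30}(iii), which already identifies the family $(Y_o)_{o\in\mathcal{C}\left(u\right)}$ with the complete collection of orbits of $B_n(\sigma)$ acting on $[1,nd]$. Thus the question of transitivity reduces to the question of whether this family consists of a single block.

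First I would verify that each $Y_o$ is non-empty, so that the blocks of the partition are in bijection with the set $\mathcal{C}\left(u\right)$. This holds because the definition $\mathcal{C}(\tau) = \cup_{m\geq 1}\mathcal{C}_m(\tau)$ includes the fixed points of $\tau$ as length-$1$ cycles, so every $\alpha \in \mathcal{C}(\tau)$ has $|supp(\alpha)|\geq 1$, whence $X_o\neq\emptyset$ and therefore $Y_o\neq\emptyset$ for every $o\in\mathcal{C}\left(u\right)$.

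With this in hand, $B_n(\sigma)$ is transitive on $[1,nd]$ if and only if the orbit partition has exactly one block, which in view of the bijection above is equivalent to $|\mathcal{C}\left(u\right)|=1$, i.e., to $u$ consisting of a single cycle on the whole of $\mathcal{C}\left(\tau\right)$ --- the meaning of $u$ being a long cycle. Conversely, when $u$ is a long cycle, there is a unique $o\in\mathcal{C}\left(u\right)$, so $X_o = [1,d]$, $Y_o = [1,nd]$ and $B_n(\sigma) = B_n(\sigma^{\left(o\right)})$ acts transitively on $Y_o$ again by Proposition~\ref{III.3.30}(iii).

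There is no real obstacle: the content is entirely contained in Proposition~\ref{III.3.30}(iii), and this corollary merely repackages the orbit decomposition as a transitivity criterion. The author's phrase ``straightforward inspection'' used in part (iii) of the proposition applies equally well here.
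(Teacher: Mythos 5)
Your proposal is correct and follows the paper's own route exactly: the paper derives the corollary immediately from Proposition~\ref{III.3.30}(iii), and you have simply spelled out the (valid) bookkeeping that the orbits $Y_o$ are non-empty and in bijection with $\mathcal{C}\left(u\right)$, so transitivity amounts to $u$ being a single (long) cycle.
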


The intersection $BW_n(\sigma)=B_n(\sigma)\cap W^{\left(n\right)}$
is a normal subgroup of $B_n(\sigma)$. In particular, if $\tau\in
W$, and $\langle\tau\rangle$ is the cyclic group generated by
$\tau$, then $A_n(\sigma)=B_n(\sigma)\cap
\langle\tau\rangle^{\left(n\right)}$ is an abelian normal subgroup
of both $B_n(\sigma)$ and $BW_n(\sigma)$.

\begin{lemma}\label{III.3.3} One has $\sigma_s^2\in A_n(\sigma)$,
the restriction of the conjugations $c_{\sigma_s}$ on
$A_n(\sigma)$ are involutions, and the following equalities hold:
\[
c_{\sigma_s}(\omega^{sd}(\tau^2))=\omega^{\left(s-1\right)d}(\tau^2),
\]
for $s=1,\ldots, n-1$, and
\[
c_{\sigma_{r+1}}(\sigma_r^2)=c_{\sigma_r}(\sigma_{r+1}^2)=
\omega^{r-1}(\tau)\omega^{\left(r+1\right)d}(\tau),
\]
for $r=1,\ldots, n-2$.

\end{lemma}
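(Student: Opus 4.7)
The plan is to exploit the explicit decomposition
\[
\sigma_s = \omega^{(s-1)d}(\sigma) = \theta_s\cdot\omega^{(s-1)d}(\varsigma_1)\cdot\omega^{sd}(\varsigma_2),
\]
obtained by applying $\omega^{(s-1)d}$ to $\sigma=\theta\varsigma_1\omega^d(\varsigma_2)$ (noting that $\omega^{(s-1)d}(\theta)=\theta_s$, since $\omega^{(s-1)d}$ shifts indices by $(s-1)d$ and $\theta=\theta_1$ swaps $[1,d]$ with $[d+1,2d]$). Applying the same shift to the braid-like identity $\sigma^2=\tau\omega^d(\tau)$ from Theorem~\ref{III.2.12} immediately gives $\sigma_s^2=\omega^{(s-1)d}(\tau)\omega^{sd}(\tau)\in\langle\tau\rangle^{(n)}$; combined with $\sigma_s\in B_n(\sigma)$ this yields $\sigma_s^2\in A_n(\sigma)$, which is the first assertion.

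For the involution statement I would observe that $c_{\sigma_s}^2=c_{\sigma_s^2}$. Since $\sigma_s^2$ lies in the abelian group $\langle\tau\rangle^{(n)}$ which contains $A_n(\sigma)$, conjugation by $\sigma_s^2$ acts trivially on $A_n(\sigma)$; the fact that $c_{\sigma_s}$ actually preserves $A_n(\sigma)$ is exactly the normality of $A_n(\sigma)$ in $B_n(\sigma)$, already recorded just before the lemma. Thus the restriction of $c_{\sigma_s}$ to $A_n(\sigma)$ has order dividing~$2$.

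The two remaining identities are direct computations, driven by two simple observations: (a) conjugation by $\theta_s$ swaps the adjacent factors $\omega^{(s-1)d}(S_d)$ and $\omega^{sd}(S_d)$ of the base group and fixes all other factors, and (b) each of $\varsigma_1$, $\varsigma_2$ commutes with $\tau$, since $\tau=\varsigma_1\varsigma_2=\varsigma_2\varsigma_1$ by the last assertion of Theorem~\ref{III.2.12}. For the first identity I would expand $\sigma_s\omega^{sd}(\tau^2)\sigma_s^{-1}$: the factor $\omega^{(s-1)d}(\varsigma_1)$ has support disjoint from $\omega^{sd}(\tau^2)$, and the factor $\omega^{sd}(\varsigma_2)$ commutes with $\omega^{sd}(\tau^2)$ by (b), so everything collapses to $c_{\theta_s}(\omega^{sd}(\tau^2))=\omega^{(s-1)d}(\tau^2)$ by (a). For the second I would apply $c_{\sigma_{r+1}}$ to each factor of $\sigma_r^2=\omega^{(r-1)d}(\tau)\omega^{rd}(\tau)$: the first factor has disjoint support from every piece of $\sigma_{r+1}$ and is therefore fixed, while the second factor is carried to $\omega^{(r+1)d}(\tau)$ by the same cancellation. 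The mirror equality $c_{\sigma_r}(\sigma_{r+1}^2)=\omega^{(r-1)d}(\tau)\omega^{(r+1)d}(\tau)$ is obtained by interchanging the roles of $r$ and $r+1$.

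The only real obstacle is bookkeeping: tracking which of the blocks $[(j-1)d+1,jd]$ are touched by each factor in each formula, and systematically applying (a) and (b) so that every contribution other than the transposition $\theta_s$ disappears from the conjugation.
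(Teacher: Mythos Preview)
Your argument is correct and follows essentially the same route as the paper: decompose $\sigma_s=\theta_s\,\omega^{(s-1)d}(\varsigma_1)\,\omega^{sd}(\varsigma_2)$, use that $\varsigma_1,\varsigma_2$ commute with $\tau$, and reduce each conjugation to the block swap effected by $\theta_s$. The only cosmetic difference is that the paper first reduces to the cases $s=1$, $r=1$ and computes there, whereas you carry out the same cancellation directly for arbitrary $s$ and $r$.
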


\begin{proof} Let $\sigma_1=\theta\varsigma_1\omega^d(\varsigma_2)$.
Theorem~\ref{III.2.12} yields
$\varsigma_1\varsigma_2=\varsigma_2\varsigma_1=\tau$, and
$\sigma_1^2=\tau\omega^d(\tau)\in A_n(\sigma)$. Since the
group $A_n(\sigma)$ is abelian, the conjugations $c_{\sigma_s^2}$
coincide with the identity of $A_n(\sigma)$, that is,
$c_{\sigma_s}=c_{\sigma_s^{-1}}$ on the group $A_n(\sigma)$. It is
enough to prove the equalities for $s=1$ and $r=1$. We have
\[
c_{\sigma_1}(\omega^d(\tau^2))=c_{\varsigma_2}(\tau^2)\omega^{2d}(1)=\tau^2,
\]
\[
c_{\sigma_2}(\sigma_1^2)=\tau\omega^{2d}(c_{\varsigma_1}(\tau))=
\tau\omega^{2d}(\tau),
\]
\[
c_{\sigma_1}(\sigma_2^2)=c_{\varsigma_2}(\tau)\omega^{2d}(\tau)
=\tau\omega^{2d}(\tau).
\]

\end{proof}

\begin{theorem}\label{III.3.4} Let $W\leq S_d$ be a permutation group.
Let us suppose that $\sigma\in\theta W^{\left(2\right)}$, and that
the pair of permutations $\sigma$, $\omega^d(\sigma)$ is
braid-like. Let $\sigma^2=\tau\omega^d(\tau)$, and let $q$ be the
order of $\tau\in W$.

(i) The map $\theta_s\mapsto\sigma_s\bmod{BW_n(\sigma)}$,
$s=1,\ldots, n-1$, can be extended to an isomorphism $\Sigma_n\to
B_n(\sigma)/BW_n(\sigma)$;

(ii) one has $BW_n(\sigma)=A_n(\sigma)$, and $A_n(\sigma)$ is an
abelian group, isomorphic to
 \begin{equation}
 \underbrace{\hbox{\ccc Z}/(q)\coprod\cdots\coprod
 \hbox{\ccc Z}/(q)}_{\hbox{$n-1$ times}}\coprod \hbox{\ccc
Z}/(q_2)\label{III.3.6}
 \end{equation}
 where $q=q_2\delta$, and $\delta$ is the greatest common divisor of $q$ and $2$;

 (iii) the group $B_n(\sigma)$ is an extension of the symmetric group $S_n$
 by the abelian group~(\ref{III.3.6}); if, in addition, $q$ is odd, then this  
 extension splits.  

\end{theorem}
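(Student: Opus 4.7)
For part (i), the plan is to restrict the projection $W \wr S_n \twoheadrightarrow \Sigma_n$, whose kernel is the base $W^{(n)}$, to $B_n(\sigma)$: since each $\sigma_s$ maps to $\theta_s$, the image is all of $\Sigma_n$ and the kernel is exactly $BW_n(\sigma)$, yielding the isomorphism.

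For part (ii), I would first prove $BW_n(\sigma) = A_n(\sigma)$ by a surjective-endomorphism argument. The inclusion $A_n(\sigma) \subseteq BW_n(\sigma)$ is immediate, and Lemma~\ref{III.3.3} gives $\sigma_s^2 \in A_n(\sigma)$, so the images of the $\sigma_s$ in $B_n(\sigma)/A_n(\sigma)$ satisfy the braid relations and square to the identity, making $B_n(\sigma)/A_n(\sigma)$ a quotient of $S_n$. Composing with $B_n(\sigma)/A_n(\sigma) \twoheadrightarrow B_n(\sigma)/BW_n(\sigma) \simeq S_n$ gives a surjective endomorphism of $S_n$ that is the identity on Coxeter generators, hence an isomorphism. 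To describe $A_n(\sigma)$ as an abelian group inside $\langle\tau\rangle^{(n)} \simeq (\mathbb{Z}/q)^n$, write $\tau_s = \omega^{(s-1)d}(\tau) \leftrightarrow e_s$. The same argument applied to the normal closure in $B_n(\sigma)$ of $\sigma_1^2$ identifies $A_n(\sigma)$ with that closure. Combining Lemma~\ref{III.3.3} with the direct calculation $c_{\sigma_s}(\tau_s) = \tau_{s+1}$ (obtained from $\sigma_1 = \theta\varsigma_1\omega^d(\varsigma_2)$ and $\varsigma_1\varsigma_2 = \tau$, using disjointness of the supports of $\varsigma_1$ and $\omega^d(\tau)$), the conjugates of $\sigma_1^2 = e_1 + e_2$ sweep out the $S_n$-orbit $\{e_i + e_j : i \neq j\}$. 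If $q$ is odd then $2$ is invertible in $\mathbb{Z}/q$ and the identity $2e_i = (e_i+e_j)+(e_i+e_k)-(e_j+e_k)$ yields $A_n(\sigma) = (\mathbb{Z}/q)^n$. If $q$ is even, a coordinate-sum parity obstruction gives $A_n(\sigma) \subseteq \{(c_1,\ldots,c_n) : \sum c_i \equiv 0 \pmod 2\}$, and the reverse inclusion follows by a telescoping decomposition of an arbitrary even-sum tuple in the basis $(e_1+e_2,\ldots,e_{n-1}+e_n,2e_n)$. An order computation with this basis produces the claimed isomorphism $A_n(\sigma) \simeq (\mathbb{Z}/q)^{n-1} \oplus \mathbb{Z}/q_2$.

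For part (iii), the extension statement is immediate from (i) and (ii). To construct a splitting when $q$ is odd, note that $A_n(\sigma) = (\mathbb{Z}/q)^n$, so $\tau_s \in B_n(\sigma)$ and I would set $\tilde\sigma_s := \sigma_s\tau_s^{-1}$. Using $c_{\sigma_s}(\tau_s) = \tau_{s+1}$ together with the involution property of Lemma~\ref{III.3.3}, one computes $\tilde\sigma_s^2 = \tau_s\tau_{s+1}\cdot\tau_{s+1}^{-1}\tau_s^{-1} = 1$, and checks the braid relations $\tilde\sigma_r\tilde\sigma_s\tilde\sigma_r = \tilde\sigma_s\tilde\sigma_r\tilde\sigma_s$ for $|r-s|=1$ by pushing all occurrences of $\tau_i^{\pm 1}$ past the $\sigma_j$'s to the right (the disjoint-support cases are trivial, and the overlapping moves are governed by $c_{\sigma_s}(\tau_s) = \tau_{s+1}$), reducing to the braid relations for the $\sigma_s$. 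The subgroup $\langle\tilde\sigma_1,\ldots,\tilde\sigma_{n-1}\rangle$ is therefore a quotient of $S_n$ in $B_n(\sigma)$ that surjects onto $S_n$, hence a copy of $S_n$ that splits the extension.

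The main obstacle is the explicit structure computation in (ii) when $q$ is even: showing that the subgroup of $(\mathbb{Z}/q)^n$ generated by $\{e_i + e_j\}$ equals the index-two even-sum subgroup requires the telescoping identity together with a careful parity check on the coefficient of $2e_n$, and the direct-sum decomposition needs the particular basis above. The braid-relation verification in (iii) is comparatively routine once $c_{\sigma_s}(\tau_s) = \tau_{s+1}$ has been established, and the failure of the formula $\tilde\sigma_s = \sigma_s\tau_s^{-1}$ to lie in $B_n(\sigma)$ when $q$ is even is consistent with the absence of a splitting in that regime.
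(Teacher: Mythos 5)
Your proposal is correct, and its skeleton matches the paper's (quotient isomorphic to $S_n$, kernel computed inside $\langle\tau\rangle^{\left(n\right)}\simeq(\hbox{\ccc Z}/(q))^n$, splitting by twisting the generators by a power of $\tau$), but several local arguments differ. For (i) you simply restrict the canonical projection $W\wr S_n\to\Sigma_n$ with kernel $W^{\left(n\right)}$; the paper instead maps the Coxeter presentation of $\Sigma_n$ onto $B_n(\sigma)/BW_n(\sigma)$ and rules out proper quotients by exhibiting $2n-1$ pairwise distinct cosets — your route is shorter and avoids that counting. For (ii) the paper identifies $BW_n(\sigma)$ directly as the normal closure of the $\sigma_s^2$ (via the presentation of $S_n$) and then does a single lattice computation in $\hbox{\ccc Z}^n$ (the sublattice with basis $f_1,\ldots,f_{n-1},h_n=2e_n$ and its intersection with $(q)\hbox{\ccc Z}^n$), uniform in $q$; you reach the same answer through the surjective-endomorphism trick for $BW_n(\sigma)=A_n(\sigma)$, the sharper conjugation rule $c_{\sigma_s}(\tau_s)=\tau_{s+1}$ (so that conjugation acts on $\langle\tau\rangle^{\left(n\right)}$ by place permutations), and a parity case split in $(\hbox{\ccc Z}/(q))^n$. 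Note that Lemma~\ref{III.3.3} only records the action on $\tau^2$, so your rule does require the small direct computation you indicate (disjointness of $\omega^d(\varsigma_2)$ from $\tau$, commutation of $\varsigma_1$ with $\tau$, and $c_\theta=\omega^d$ on $S_d$) — it is true, and it is a useful strengthening that you then reuse in (iii); also, your closing ``order computation'' should be read as: the map from $(\hbox{\ccc Z}/(q))^{n-1}\coprod\hbox{\ccc Z}/(q_2)$ defined on that basis is well defined and surjective, and equal orders give injectivity. For (iii), your $\tilde\sigma_s=\sigma_s\tau_s^{-1}$ is exactly the paper's $\eta_s$ for the particular choice $a=\tau^{-1}$ (i.e.\ $k=-1$, $\ell=0$, $k+\ell\equiv-1\bmod q$); the paper obtains $\eta_s^2=1$ and the braid relations from Corollary~\ref{III.2.13}, Theorem~\ref{III.2.12} and Proposition~\ref{III.3.30}, whereas you verify them by pushing the $\tau_i^{\pm1}$ to the right — a computation that does go through — and both arguments use oddness of $q$ only to guarantee $\tau_s\in A_n(\sigma)\subseteq B_n(\sigma)$. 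In short: correct, with a cleaner (i), an equivalent but case-split (ii), and a concrete instance of the paper's splitting in (iii).
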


\begin{proof} (i) According to, for example, \cite[Ch. 4, Theorem 4.1]{[25]},
the symmetric group $\Sigma_n$ has a standard presentation by
generators $\theta_1,\ldots, \theta_{n-1}$, and relations
\[
\theta_r\theta_s=\theta_s\theta_r,\hbox{\ } |r-s|\geq 2,
\]
\[
\theta_r\theta_s\theta_r=\theta_s\theta_r\theta_s,\hbox{\ }
|r-s|=1,
\]
and $\theta_s^2=1$, $s=1,\ldots,n-1$. Now,
Proposition~\ref{III.3.30}, (i), and Lemma~\ref{III.3.3}, yield
that $\sigma_s\bmod{BW_n(\sigma)}$, $s=1,\ldots, n-1$, satisfy the
above relations, so the map
$\theta_s\mapsto\sigma_s\bmod{BW_n(\sigma)}$, $s=1,\ldots, n-1$,
can be extended to a surjective homomorphism $\Sigma_n\to
B_n(\sigma)/BW_n(\sigma)$. Since $\sigma_s$, $s=1,\ldots, n-1$,
$\sigma_r\sigma_{r+1}$, $r=1,\ldots, n-2$, $\sigma_1\sigma_3$, and
$\sigma_1^2$, are $2n-1$ in number pairwise different elements
$\bmod{BW_n(\sigma)}$, then the above homomorphism is an
isomorphism
\begin{equation}
\Sigma_n\simeq B_n(\sigma)/BW_n(\sigma),\label{III.3.5}
\end{equation}
if $n\geq 4$. In case $n=3$ this is true because $\sigma_1$,
$\sigma_2$, and $\sigma_1^2$, are pairwise different
$\bmod{BW_3(\sigma)}$.

(ii) By part (i), the elements of $B_n(\sigma)$, which belong to
$W^{\left(n\right)}$ (that is, the elements of $B_n(\sigma)$,
which are equal to the identity element $\bmod{BW_n(\sigma)}$),
are products of conjugates of $\sigma_s^2$, and of their inverses.
Lemma~\ref{III.3.3} implies that all generators of the group
$BW_n(\sigma)$ are
\[
\sigma_s^2,\hbox{\ } s=1,\ldots, n-1,\hbox{\rm\ and\ }
c_{\sigma_r}(\sigma_{r+1}^2)=
\omega^{r-1}(\tau)\omega^{\left(r+1\right)d}(\tau),\hbox{\ }
r=1,\ldots, n-2.
\]
In particular, $BW_n(\sigma)=A_n(\sigma)$.

Let $\hbox{\ccc Z}^n$ be the free $\hbox{\ccc Z}$-module with
standard basis $e_1=(1,0,\ldots, 0)$, $e_2=(0,1,\ldots, 0)$,
$\ldots$, $e_n=(0,0,\ldots, 1)$. There exists a surjective
homomorphism of abelian groups
\begin{equation*}
\hbox{\ccc Z}^n\to \langle\tau\rangle^{\left(n\right)},
\end{equation*}
\[
(r_1,r_2,\ldots,r_n)\mapsto
\tau^{r_1}\omega^{d}(\tau^{r_2})\ldots\omega^{\left(n-1\right)d}
(\tau^{r_n}),
\]
with kernel $(q)\hbox{\ccc Z}^n$. We set
\[
f_1=e_1+e_2,\hbox{\ }f_2=e_2+e_3,\ldots,f_{n-1}=e_{n-1}+e_n,
\hbox{\ }f_n=e_n.
\]
In terms of the basis $\{f_1,f_2,\ldots, f_{n-1}, f_n\}$, the
above homomorphism can be rewritten in the form
\begin{equation}
\hbox{\ccc Z}^n\to
\langle\tau\rangle^{\left(n\right)},\label{III.3.7}
\end{equation}
\[
(\lambda_1,\lambda_2,\ldots,\lambda_{n-1}, \lambda_n)\mapsto
\tau^{\lambda_1}\omega^d(\tau^{\lambda_1+\lambda_2})\ldots
\omega^{\left(n-1\right)d}(\tau^{\lambda_{n-1}+\lambda_n}).
\]

The inverse image $H_n$ of $A_n(\sigma)$ via~(\ref{III.3.7}) is
the $\hbox{\ccc Z}$-submodule of $\hbox{\ccc Z}^n$, generated by
\[
f_1,f_2,\ldots, f_{n-1},\hbox{\rm\ and\ } g_1=e_1+e_3,\hbox{\ }
g_2=e_2+e_4,\hbox{\ }\ldots,\hbox{\ } g_{n-2}=e_{n-2}+e_n.
\]
Let us set $h_i=2e_i$, $i=1,\ldots, n$. We have
\[
g_1=f_1+f_2-h_2,\hbox{\ }g_2=f_2+f_3-h_3,\ldots,
g_{n-2}=f_{n-2}+f_{n-1}-h_{n-1},
\]
and
\[
h_1+h_2=2f_1,\hbox{\ }h_2+h_3=2f_2,\ldots, h_{n-1}+h_n=2f_{n-1}.
\]
 Moreover, the set $\{f_1,f_2,\ldots,
f_{n-1}, h_n\}$, is linearly independent over $\hbox{\ccc Z}$.
Therefore $H_n$ is a $\hbox{\ccc Z}$-submodule of $\hbox{\ccc
Z}^n$ with basis $\{f_1,f_2,\ldots, f_{n-1}, h_n\}$. The
restriction of the homomorphism~(\ref{III.3.7}) on $H_n$ has
kernel $H_n\cap (q)\hbox{\ccc Z}^n$, so we obtain an isomorphism
of $\hbox{\ccc Z}/(q)$-modules
\[
\mu\colon\hbox{\ccc Z}/(q)\coprod\cdots\coprod \hbox{\ccc Z}/(q)
\coprod \hbox{\ccc Z}/(q_2)\to A_n(\sigma),
\]
\[
(\lambda_1\bmod{q},\lambda_2\bmod{q},\ldots,\lambda_{n-1}\bmod{q},
\lambda_n\bmod{q_2})\mapsto
\]
\[
\tau^{\lambda_1}\omega^d(\tau^{\lambda_1+\lambda_2})\ldots
\omega^{\left(n-1\right)d}(\tau^{\lambda_{n-1}+2\lambda_n}).
\]

(iii) We identify the multiplicatively written group $A_n(\sigma)$
with its additively written  version via $\mu$. We also identify
the factorgroup $B_n(\sigma)/A_n(\sigma)$ with the symmetric group
$\Sigma_n$ via the isomorphism~(\ref{III.3.5}), and let
\[
\pi\colon B_n(\sigma)\to \Sigma_n
\]
be the corresponding canonical surjective homomorphism. We have
the short exact sequence of groups
\begin{equation*}
\begin{CD}
0@> >>\hbox{\ccc Z}/(q)\coprod\cdots\coprod \hbox{\ccc Z}/(q)
\coprod \hbox{\ccc Z}/(q_2)
@>\mu>> B_n(\sigma)@>\pi>> \Sigma_n @> >>1.\\
\end{CD}
\end{equation*}
In particular, the group $B_n(\sigma)$ is an extension of the
symmetric group $S_n$ by the abelian group~(\ref{III.3.6}). 

In accord with Theorem~\ref{III.2.12}, (i), (ii), its
Corollary~\ref{III.2.13}, and Proposition~\ref{III.3.30}, (i), if
$a\in \langle\tau\rangle^{\left(2\right)}$,
$a=\tau^k\omega^d(\tau^\ell)$, then the permutations
$\eta_1=\sigma a$, $\eta_2=\omega^d(\eta_1)$, $\ldots$,
$\eta_{n-1}=\omega^{\left(n-2\right)}(\eta_1)$, satisfy the braid
relations, hence the group $B_n(\sigma a)$ is
$n$-braid-like. If, $k+\ell\equiv -1\bmod{q}$, then
$\eta_s^2=1$, $s=1,\ldots, n-1$, and in this case the map
$\theta_s\mapsto\eta_s$, $s=1,\ldots, n-1$, can be extended to a
homomorphism $\rho\colon\Sigma_n\to B_n(\sigma a)$. 
Let us suppose, in addition, that $q$ is odd. 
Then for any two integers $k$, $\ell$, we have $a\in A_n(\sigma)$, 
hence $B_n(\sigma a)$ is a subgroup of 
$B_n(\sigma)$, and the homomorphism $\rho$ splits $\pi$.

\end{proof}

As an immediate consequence of the above theorem, we obtain

\begin{corollary}\label{III.3.10} If the order
$q$ of the permutation $\tau$ is odd, then the abstract group $B_n(\sigma)$
does not depend on the permutation $\sigma$, but only on $q$.

\end{corollary}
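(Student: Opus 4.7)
The plan is to upgrade the split extension produced by Theorem~\ref{III.3.4} to an \emph{explicit} semidirect product whose isomorphism type visibly depends only on $q$ and $n$. When $q$ is odd we have $\delta=\gcd(q,2)=1$ and hence $q_2=q$, so the abelian group~(\ref{III.3.6}) is $(\mathbb{Z}/(q))^n$, and Theorem~\ref{III.3.4}(iii) gives $B_n(\sigma)\simeq A_n(\sigma)\rtimes_\phi\Sigma_n$ for some action $\phi$ of $\Sigma_n\simeq S_n$ on $A_n(\sigma)\simeq(\mathbb{Z}/(q))^n$. It therefore suffices to show that $A_n(\sigma)$ coincides with the full base group $\langle\tau\rangle^{(n)}$, and that $\phi$ is the standard coordinate-permutation action; the end result $B_n(\sigma)\simeq (\mathbb{Z}/(q))\wr S_n$ will then make the $\sigma$-independence visible.

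For the first point I would revisit the $\mathbb{Z}$-submodule $H_n$ of $\mathbb{Z}^n$ from the proof of Theorem~\ref{III.3.4}(ii). The elements $f_s=e_s+e_{s+1}$ and $g_s=e_s+e_{s+2}$ already lie in $H_n$, and the identity $g_s=f_s+f_{s+1}-h_{s+1}$ together with $h_s+h_{s+1}=2f_s$ implies that all $h_i=2e_i$ belong to $H_n$. Reducing modulo $q\mathbb{Z}^n$ and using that $q$ odd makes $2$ a unit in $\mathbb{Z}/(q)$, we conclude $H_n+q\mathbb{Z}^n=\mathbb{Z}^n$, so the image $A_n(\sigma)$ of $H_n$ under~(\ref{III.3.7}) is the entire group $\langle\tau\rangle^{(n)}$, which we identify with $(\mathbb{Z}/(q))^n$ via the standard basis $e_r\leftrightarrow\omega^{(r-1)d}(\tau)$.

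To determine $\phi$, I would compute $c_{\sigma_s}$ on each factor $\omega^{(r-1)d}(\langle\tau\rangle)$. For $r\neq s,s+1$ the permutation $\sigma_s$ has support disjoint from that of $\omega^{(r-1)d}(\tau^k)$, so $c_{\sigma_s}$ fixes these factors pointwise. Lemma~\ref{III.3.3} gives $c_{\sigma_s}(\omega^{sd}(\tau^2))=\omega^{(s-1)d}(\tau^2)$; since $q$ is odd, $\tau^2$ generates $\langle\tau\rangle$, so $c_{\sigma_s}$ sends the $(s+1)$-th factor isomorphically onto the $s$-th, and by applying Lemma~\ref{III.3.3} again (or by using $c_{\sigma_s}^2=\mathrm{id}_{A_n(\sigma)}$) it sends the $s$-th onto the $(s+1)$-th. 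Hence $\phi$ is exactly the action of $S_n$ on $(\mathbb{Z}/(q))^n$ by permutation of coordinates, under the isomorphism $\Sigma_n\simeq S_n$ of Section~\ref{III.2}.

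Combining these two steps yields $B_n(\sigma)\simeq(\mathbb{Z}/(q))^n\rtimes S_n=(\mathbb{Z}/(q))\wr S_n$, a group determined by $q$ and $n$ alone. The only delicate point is the first step: one must be sure that the oddness of $q$ really does force $A_n(\sigma)$ to be the full base group, which is why the argument that $2$ becomes invertible modulo $q$ is essential and also explains why the analogous statement fails when $q$ is even (where $q_2=q/2$ is strictly smaller than $q$ and $\tau^2$ no longer generates $\langle\tau\rangle$).
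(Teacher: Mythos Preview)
Your argument is correct. The paper itself records Corollary~\ref{III.3.10} only as ``an immediate consequence'' of Theorem~\ref{III.3.4} with no further proof, so what you have written is a fully fleshed-out version of the intended reasoning: once $q$ is odd the kernel is $(\hbox{\ccc Z}/(q))^n$, the extension splits, and the conjugation action of $\Sigma_n$ depends only on $q$ and $n$.

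Your write-up actually goes further than the paper. By observing that $H_n\supset\{2e_1,\ldots,2e_n\}$ and that $2$ is a unit modulo $q$, you identify $A_n(\sigma)$ with the \emph{full} base group $\langle\tau\rangle^{(n)}$ in its natural coordinates $e_r\leftrightarrow\omega^{(r-1)d}(\tau)$, and then read off from Lemma~\ref{III.3.3} that the $\Sigma_n$-action is the standard coordinate permutation; this yields the clean identification $B_n(\sigma)\simeq(\hbox{\ccc Z}/(q))\wr S_n$, which the paper never states. The paper instead describes the monodromy action in the basis $\{f_1,\ldots,f_{n-1},h_n\}$ (see the formulas preceding Proposition~\ref{III.3.11}); those formulas are equivalent to yours but less transparent. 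One small remark: your appeal to Lemma~\ref{III.3.3} for $\tau^2$ and the ``$\tau^2$ generates $\langle\tau\rangle$'' trick is fine, but in fact the same short conjugation computation used in the proof of that lemma gives $c_{\sigma_s}(\omega^{sd}(\tau))=\omega^{(s-1)d}(\tau)$ directly, since $\varsigma_1,\varsigma_2$ commute with $\tau$; you could cite that instead and drop the oddness assumption at that particular step.
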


We set $\iota_s=c_{\sigma_s}$, $s=1,\ldots, n-1$. Then $\iota_s$
are automorphisms of the group $\hbox{\ccc Z}/(q)\cdots\coprod
\hbox{\ccc Z}/(q) \coprod \hbox{\ccc Z}/(q_2)$. Taking into
account Lemma~\ref{III.3.3}, we have that $\iota_s$ are
involutions with $\iota_s(f_r)=g_{\min\{r,s\}}$ if $|r-s|=1$,
$\iota_s(f_s)=f_s$, $\iota_s(f_r)=f_r$ if $|r-s|\geq 2$, and
$\iota_{n-1}(h_n)=h_{n-1}$, for any $s=1,\ldots, n-1$, and
$r=1,\ldots, n-2$.

\begin{proposition}\label{III.3.11} The monodromy homomorphism
\[
m\colon \Sigma_n\to Aut(\hbox{\ccc Z}/(q)\coprod\cdots\coprod
\hbox{\ccc Z}/(q) \coprod \hbox{\ccc Z}/(q_2)),\hbox{\
}\theta_s\mapsto \iota_s,
\]
that corresponds to the extension from
Theorem~\ref{III.3.4}, (iii), is injective.

\end{proposition}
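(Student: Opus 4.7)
The plan is to show that any $\pi\in\ker m$ must be the identity in $\Sigma_n$. The key point is that, after identifying $\langle\tau\rangle^{\left(n\right)}$ with $(\hbox{\ccc Z}/(q))^n$ via $\sum r_ke_k\mapsto\prod_k\omega^{\left(k-1\right)d}(\tau^{r_k})$, the action of $m(\pi)$ on the submodule $A_n(\sigma)$ coincides with the restriction of the standard coordinate-permutation action $e_k\mapsto e_{\pi(k)}$ of $S_n$. Once this identification is in hand, the injectivity of $m$ follows from an elementary calculation that uses only the generators $f_i=e_i+e_{i+1}$.

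To justify the identification, I would verify that $c_{\sigma_s}$ and $c_{\theta_s}$ agree on $\langle\tau\rangle^{\left(n\right)}$. Writing $\sigma_s=\theta_s\cdot\xi_s$ with $\xi_s=\omega^{\left(s-1\right)d}(\varsigma_1)\omega^{sd}(\varsigma_2)$, it suffices to check that $\xi_s$ centralizes every $\omega^{\left(k-1\right)d}(\tau^r)$. Since $\xi_s$ is supported in the $s$-th and $(s+1)$-st blocks, the only nontrivial cases are $k=s$ and $k=s+1$; in those blocks $\varsigma_1$ and $\varsigma_2$ each commute with $\tau$ by Theorem~\ref{III.2.12}, which gives the desired centralization. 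Hence $\iota_s$ acts on $A_n(\sigma)$ by swapping $e_s$ and $e_{s+1}$, consistently with the formulas listed just before the proposition (for instance, $\iota_s(f_{s-1})=\iota_s(e_{s-1}+e_s)=e_{s-1}+e_{s+1}=g_{s-1}$).

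Now assume $m(\pi)$ is the identity on $A_n(\sigma)$. Since each $f_i=e_i+e_{i+1}$ lies in $A_n(\sigma)$, we obtain the equation $e_{\pi(i)}+e_{\pi(i+1)}=e_i+e_{i+1}$ in $(\hbox{\ccc Z}/(q))^n$ for every $i=1,\ldots,n-1$. Because $q\geq 2$ and $\{e_1,\ldots,e_n\}$ is a basis, this forces the unordered pair $\{\pi(i),\pi(i+1)\}$ to coincide with $\{i,i+1\}$; otherwise, some $e_j$ would occur with coefficient $\pm 1\neq 0$ in a vanishing linear combination. A short induction then pins down $\pi$: if $\pi(1)=2$ then $\pi(2)=1$, which contradicts the preservation of the pair $\{2,3\}$; so $\pi(1)=1$, $\pi(2)=2$, and continuing upward $\pi(i)=i$ for all $i$. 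The principal technical step is the first one, where one must carefully peel off the base-group factor $\xi_s$ from $\sigma_s$ so as to see $\iota_s$ as a plain coordinate transposition; the combinatorial conclusion is then routine.
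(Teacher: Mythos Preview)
Your proof is correct and follows a genuinely different route from the paper's. The paper argues by counting: it asserts (without detailed verification, relying on the formulas listed just before the proposition) that the $2n-1$ automorphisms $\iota_s$ ($s=1,\ldots,n-1$), $\iota_r\iota_{r+1}$ ($r=1,\ldots,n-2$), $\iota_1^2=\mathrm{id}$, and $\iota_1\iota_3$ are pairwise distinct in the image of $m$, and then implicitly uses the structure of normal subgroups of $S_n$ (no nontrivial one of small enough order) to force the kernel to be trivial; the case $n=3$ is handled separately with three distinct elements. You instead identify $m$ with the restriction to $A_n(\sigma)$ of the standard coordinate-permutation action of $S_n$ on $(\hbox{\ccc Z}/(q))^n$, via the clean observation that $c_{\sigma_s}=c_{\theta_s}$ on $\langle\tau\rangle^{(n)}$ because $\varsigma_1,\varsigma_2$ commute with $\tau$, and then check directly that any permutation fixing every $f_i=e_i+e_{i+1}$ must be the identity. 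Your argument is more self-contained: it avoids both the normal-subgroup classification and the unproved pairwise-distinctness claim, and it treats all $n\geq 3$ uniformly. The paper's approach is shorter once those ingredients are granted, and it mirrors the trick already used in the proof of Theorem~\ref{III.3.4}(i). Both arguments tacitly require $q\geq 2$, which you make explicit.
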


\begin{proof} Since the automorphisms $\iota_s$, $s=1,\ldots, n-1$,
$\iota_r\iota_{r+1}$, $r=1,\ldots, n-2$, $\iota_1^2=id$, and
$\iota_1\iota_3$, are pairwise different, the homomorphism $m$ is
injective when $n\geq 4$. The existence of three pairwise
involutions $\iota_1$, $\iota_2$, and $\iota_1^2$ yields that the
homomorphism $m$ is injective also for $n=3$.

\end{proof}

\end{document}